\newcommand{\para}{\par\vspace{.25cm}}
\newcommand{\Para}{\par\vspace{.5cm}\noindent}
\numberwithin{equation}{section}
\theoremstyle{plain}
\newtheorem{cor}[equation]{Corollary}
\newtheorem{lemma}[equation]{Lemma}
\newtheorem*{thma}{Theorem 1}
\newtheorem*{thmb}{Theorem 2}
\newtheorem*{thmd}{Theorem 4}
\newtheorem*{thmf}{Theorem 8}
\newtheorem*{thmg}{Theorem 7}
\newtheorem*{cora}{Corollary 3}
\newtheorem*{corb}{Corollary 5}
\newtheorem*{corc}{Corollary 6}
\newtheorem*{lemma1'}{Lemma 2.2$'$}
\theoremstyle{definition}
\newtheorem{remark}[equation]{Remark}
\newcommand{\dlabel}[1]{\ifmmode \text{\ttfamily \upshape [#1] } \else
{\ttfamily \upshape [#1] }\fi \label{#1}}
\newcommand{\B}{\operatorname{B} }
\newcommand{\C}{\operatorname{C} }
\newcommand{\Ha}{\operatorname{H} }
\newcommand{\Z}{\operatorname{Z} }
\newcommand{\Aut}{\operatorname{Aut} }
\newcommand{\Inn}{\operatorname{Inn} }
\newcommand{\Autcent}{\operatorname{Autcent} }
\begin{document}

\title{Automorphisms of abelian group extensions}

\author{I. B. S. Passi}
\address{Center for Advanced Study in Mathematics, Panjab University, Chandigarh 160014, INDIA\\
and Indian Institute of Science Education and Research, Mohali, Transit Campus: MGSIPAP Complex, Sector 26, Chandigarh  160019, INDIA.}
\email{ibspassi@yahoo.co.in}

\author{Mahender Singh}
\address{The Institute of Mathematical Sciences, CIT Campus, Taramani, Chennai 600113, INDIA.}
\email{mahender@imsc.res.in}

\author{Manoj K. Yadav}
\address{School of Mathematics, Harish-Chandra Research Institute, Chhatnag Road, Jhunsi, Allahabad 211019, INDIA.}
\email{myadav@hri.res.in}

\subjclass[2000]{Primary 20D45; Secondary 20J05}
\keywords{Abelian extensions, automorphisms of groups, cohomology of groups, split extensions}

\begin{abstract}
Let $1 \to N \to G \to H \to 1$ be an abelian extension. The purpose of this paper is to study the problem of  extending automorphisms of $N$ and  lifting automorphisms of $H$ to certain automorphisms of $G$.
\end{abstract}

\maketitle

\section{Introduction}
Let
$1 \to N \to G \stackrel{\pi}{\to} H \to 1$
be a short exact sequence of groups, i.e., an extension of a group $N$ by the group $H \simeq G/N$. If $N$ is abelian, then such an extension is called an \textit{abelian extension}. Our  aim in this paper  is to construct certain exact sequences, similar to the  one due to C. Wells \cite{Wells},   and apply  them to study extensions and liftings of automorphisms in abelian extensions. More precisely, we study, for abelian extensions, the following well-known problem  (see \cite{buckley75, Jin, robinson81}):

\par\vspace{.5cm}
\noindent {\bf Problem.} {\it Let $N$ be a normal subgroup of $G$. Under what conditions (i) can an automorphism of $N$  be  extended to an automorphism of $G$; (ii) an automorphism of $G/N$  is induced by an automorphism of $G$?}

\par\vspace{.5cm} Let $1 \to N \to G \stackrel{\pi}{\to} H \to 1$ be an abelian extension. 
We fix a left transversal $t : H \to G$ for $H$ in $G$ such that $t(1) = 1$, so that every element of $G$ can be written uniquely as $t(x)n$ for some $x \in H$ and $n \in N$. 
Given an  element $x \in H$, we  define an action of $x$ on $N$ by setting $n^{x} = t(x)^{-1}nt(x)$; we thus have a homomorphism  $\alpha:H\to \Aut(N)$ enabling us to view $N$ as a right $H$-module.
\para
A pair $(\theta,\, \phi) \in \Aut(N) \times \Aut(H)$ is called \textit{compatible} if
${{\theta}}^{-1}x^{{\alpha}}{\theta} = (x^{\phi})^{{\alpha}}$ for all $x\in H$. Let $\C$ denote the group of all compatible pairs. Let  $\C_1=\{\theta\in \Aut(N)\,|\, (\theta,\,1)\in \C\}$ and $\C_2=\{\phi\in  \Aut(H)\,|\,(1,\,\phi)\in \C\}$.
\para
We denote by $\Aut^{N,\, H}(G)$ the group of all automorphisms of $G$ which centralize $N$ (i.e., fix $N$ element-wise) and induce identity on $H$. By $\Aut^N(G)$ and $\Aut_N(G)$ we denote respectively the group of all automorphisms of $G$ which centralize $N$ and the group of all automorphisms $\alpha$ of $G$ which normalize $N$ (i.e., $\alpha(N) = N$). By $\Aut_N^H(G)$ we denote the group of all automorphisms of $G$ which normalize $N$ and induce identity on $H$. 
\para
Observe  that an automorphism $\gamma \in \Aut_N(G)$ induces automorphisms $\theta \in \Aut(N)$ and $\phi \in \Aut(H)$ given by $\theta(n)= \gamma(n)$ for all $n \in N$ and $\phi(x)= \gamma(t(x))N$ for all $x \in H$. We can thus define a homomorphism $\tau: \Aut_N(G) \to \Aut(N) \times \Aut(H)$ by setting $\tau(\gamma)= (\theta,\, \phi)$. We denote the restrictions of $\tau$ to $\Aut_N^H(G)$ and $\Aut^N(G)$ by $\tau_1$ and $\tau_2$ respectively.
\para
With the above notation, there exists the following exact sequence, first constructed by C.  Wells \cite{Wells}, which relates automorphisms of group extensions with group cohomology:
\[1 \to \Z_{\alpha}^1(H,\, \Z(N)) \to \Aut_N(G)  \stackrel{\tau}{\to} C \to \Ha_{\alpha}^2(H,\, \Z(N)).\]
Recently P. Jin \cite{Jin} gave an explicit description of this sequence for automorphisms of $G$ acting trivially on $H$ and obtained some interesting results regarding extensions of automorphisms of $N$ to automorphisms of $G$ inducing identity on $H$. We continue in the present work this line of investigation.
\par\vspace{.5cm}

In Section 2, we establish our exact sequences.
\para
\begin{thma}\label{theorem1}
If $1 \to N \to G \stackrel{\pi}{\to} H \to 1$ is an abelian extension, then there  exist  the following two exact sequences:
\begin{equation}\label{thma1}
1 \to {\Aut}^{N,\, H}(G) \to \Aut_N^H(G) \stackrel{\tau_1}{\to} \C_1  \stackrel{\lambda_1}{\to} \Ha^2(H,\, N)
\end{equation}
and
\begin{equation}\label{thma2}
1 \to  \Aut^{N,\, H}(G) \to \Aut^N(G) \stackrel{\tau_2}{\to} \C_2 \stackrel{\lambda_2}{\to} \Ha^2(H,\, N).
\end{equation}
\end{thma}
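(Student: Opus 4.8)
The plan is to build both sequences directly, mirroring Wells' construction but tracking the two coordinates of $\tau$ separately; the content of the theorem sits entirely in the obstruction maps $\lambda_1,\lambda_2$ and in exactness at $\C_1$ and $\C_2$. Throughout I fix the factor set $f$ determined by the transversal $t$, so that $t(x)t(y)=t(xy)\,f(x,y)$ with $f(x,y)\in N$; since $N$ is abelian, $f$ is an ordinary $2$-cocycle $f\in\Z^2_{\alpha}(H,\,N)$ for the module structure $\alpha$.

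First I would dispose of the formal exactness. The maps $\Aut^{N,\,H}(G)\hookrightarrow\Aut_N^H(G)$ and $\Aut^{N,\,H}(G)\hookrightarrow\Aut^N(G)$ are inclusions, hence injective, giving exactness at the left-hand terms. Since $\tau_1,\tau_2$ are restrictions of the homomorphism $\tau$ they are homomorphisms, and one checks that they land in $\C_1,\C_2$: for $\gamma\in\Aut_N^H(G)$, expanding $\gamma(n^x)=\gamma(t(x))^{-1}\gamma(n)\gamma(t(x))$ and using that $N$ is abelian gives $\theta^{-1}x^{\alpha}\theta=x^{\alpha}$, i.e. $(\theta,\,1)\in\C$; the computation for $\tau_2$ is symmetric and yields $(x^{\phi})^{\alpha}=x^{\alpha}$. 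Exactness at $\Aut_N^H(G)$ and at $\Aut^N(G)$ is then immediate: $\tau_1(\gamma)=\mathrm{id}_N$ says $\gamma$ centralizes $N$, and $\gamma$ already induces the identity on $H$, so $\gamma\in\Aut^{N,\,H}(G)$; likewise for $\tau_2$.

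The heart of the argument is the construction of $\lambda_1$ together with exactness at $\C_1$. Given $\theta\in\C_1$, any extension $\gamma\in\Aut_N^H(G)$ of $\theta$ must have the form $\gamma(t(x)\,n)=t(x)\,g(x)\,\theta(n)$ for some map $g\colon H\to N$. Imposing that $\gamma$ respect $t(x)t(y)=t(xy)f(x,y)$ and cancelling in the abelian group $N$ reduces everything to the single relation $\theta(f(x,y))\,f(x,y)^{-1}=g(x)^{y}\,g(y)\,g(xy)^{-1}$, that is, $\theta(f)\,f^{-1}=\delta g$. Because $\theta\in\C_1$ is $H$-equivariant it sends cocycles to cocycles, so $\theta(f)\,f^{-1}\in\Z^2_{\alpha}(H,\,N)$ and I may define $\lambda_1(\theta)=[\theta(f)\,f^{-1}]=\theta_{*}[f]-[f]\in\Ha^2(H,\,N)$; this is well defined since $f$ is fixed, and it is a derivation for the natural action $\theta\mapsto\theta_{*}$ of $\C_1$ on $\Ha^2(H,\,N)$. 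The displayed relation now reads: $\theta\in\mathrm{im}\,\tau_1$ precisely when $\theta(f)\,f^{-1}$ is a coboundary, i.e. when $\lambda_1(\theta)=0$, which is exactness at $\C_1$. The second sequence is entirely parallel, with $\lambda_2(\phi)=[\phi^{*}f\cdot f^{-1}]=\phi^{*}[f]-[f]$ where $(\phi^{*}f)(x,y)=f(x^{\phi},y^{\phi})$; here the relation $(x^{\phi})^{\alpha}=x^{\alpha}$ coming from $(1,\,\phi)\in\C$ is what makes $\phi^{*}f$ again an $\alpha$-cocycle, and the lifting equation for $\gamma(t(x)\,n)=t(x^{\phi})\,g(x)\,n$ collapses to $\phi^{*}f\cdot f^{-1}=\delta g$.

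The one point that demands genuine care — and the main obstacle — is the converse half of exactness at $\C_1$ and $\C_2$: from a solution $g$ of $\delta g=\theta(f)f^{-1}$ (respectively $\delta g=\phi^{*}f\cdot f^{-1}$) I must check that the formula above defines an \emph{automorphism} of $G$ lying in the correct subgroup, not merely an endomorphism. The homomorphism property is exactly the cocycle identity just solved; bijectivity follows because the map restricts to the automorphism $\theta$ (respectively the identity) on $N$ and induces the identity (respectively $\phi$) on $G/N\cong H$, so the five lemma — or an explicit inverse built from $\theta^{-1}$ and $g$ — shows it is an automorphism normalizing $N$ with the required action. This establishes both sequences. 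One can also see them at a stroke as the restrictions of Wells' sequence along the subgroups $\C_1=\C\cap(\Aut(N)\times 1)$ and $\C_2=\C\cap(1\times\Aut(H))$, using $\Aut_N^H(G)=\tau^{-1}(\C_1)$, $\Aut^N(G)=\tau^{-1}(\C_2)$, and the identification $\Z^1_{\alpha}(H,\,N)\cong\Aut^{N,\,H}(G)$.
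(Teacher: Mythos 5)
Your proposal is correct and follows essentially the same route as the paper: it isolates the obstruction cocycle $\theta(\mu)\mu^{-1}$ (resp.\ $\mu(\phi\times\phi)\mu^{-1}$) attached to a compatible automorphism, observes it is an $\alpha$-cocycle because of the compatibility condition, and identifies vanishing of its class with solvability of the coboundary equation that makes the explicit formula $\gamma(t(x)n)=t(x^{\phi})g(x)\theta(n)$ an automorphism, exactly as in the paper's Lemmas 2.2--2.5. The only differences are cosmetic: a sign convention on the cocycle, your choice to fix the transversal once and for all (the paper additionally proves independence of the transversal), and your appeal to the five lemma where the paper verifies bijectivity by hand.
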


For the definitions of maps $\lambda_1$ and $\lambda_2$, see \eqref{lambda1} and \eqref{lambda2}. It may be noted that these maps are not necessarily
homomorphisms (see Remark \ref{remark}).
\par\vspace{.5cm}

We say that an  extension $1 \to N \to G \stackrel{\pi}{\to} H \to 1$ is \textit{central} if $N \leq Z(G)$, the centre of $G$; for such extensions we construct a more general exact sequence.

\par\vspace{.25cm}

\begin{thmb}
If  $1 \to N \to G \stackrel{\pi}{\to} H \to 1$ is a central extension, then there exists  an exact sequence
\begin{equation}\label{thmb1}
1 \to \Aut^{N,\, H}(G) \to \Aut_N(G) \stackrel{\tau}{\to} \Aut(N) \times \Aut(H) \stackrel{\lambda}{\to} \Ha^2(H,\, N).
\end{equation}
\end{thmb}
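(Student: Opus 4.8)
The plan is to exploit the one structural simplification that centrality provides. Since $N \le \Z(G)$, the action $\alpha : H \to \Aut(N)$ is trivial, so $N$ is a trivial $H$-module and \emph{every} pair $(\theta,\phi) \in \Aut(N)\times\Aut(H)$ is automatically compatible; thus $\C = \Aut(N)\times\Aut(H)$ and the two separate sequences of Theorem 1, which were forced to fix either $\phi=1$ or $\theta=1$, can be fused into the single sequence \eqref{thmb1} whose $\tau$ records both induced automorphisms at once. First I would fix the (normalized, since $t(1)=1$) factor set $f \in \Z^2(H,N)$ determined by the transversal via $t(x)t(y) = t(xy)f(x,y)$, and define
\[
\lambda(\theta,\phi) \;=\; \big[\,(x,y)\longmapsto \theta\!\left(f(x,y)\right)\,f\!\left(\phi(x),\phi(y)\right)^{-1}\,\big] \;\in\; \Ha^2(H,N).
\]

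Two checks make $\lambda$ well defined. The displayed function is a $2$-cocycle, and it is exactly triviality of the action that guarantees this for \emph{every} pair $(\theta,\phi)$: with trivial coefficients both the pushforward $\theta_*f$ and the pullback $\phi^*f\colon (x,y)\mapsto f(\phi(x),\phi(y))$ remain cocycles with no compatibility constraint, so their ratio represents a class. Moreover, replacing $t$ by another transversal alters $f$ by a coboundary $\delta c$, hence alters $\theta_*f\cdot(\phi^*f)^{-1}$ by $\delta\!\left(\theta_*c\cdot(\phi^*c)^{-1}\right)$, leaving the class in $\Ha^2(H,N)$ unchanged; so $\lambda$ depends only on the extension.

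Exactness at $\Aut_N(G)$ is immediate from the definition of $\tau$: $\tau(\gamma)=(1,1)$ says precisely that $\gamma$ fixes $N$ pointwise and induces the identity on $H$, i.e. $\ker\tau = \Aut^{N,H}(G)$. For exactness at $\Aut(N)\times\Aut(H)$ I would prove both inclusions of $\operatorname{image}(\tau)=\lambda^{-1}(0)$. If $(\theta,\phi)=\tau(\gamma)$, write $\gamma(t(x))=t(\phi(x))\mu(x)$ with $\mu:H\to N$ and $\mu(1)=1$; applying $\gamma$ to $t(x)t(y)=t(xy)f(x,y)$ and using centrality to commute the $N$-values $\mu(x)$ past the transversal elements $t(\phi(y))$ yields $\theta(f(x,y))\,f(\phi(x),\phi(y))^{-1} = \mu(x)\mu(y)\mu(xy)^{-1}$, whence $\lambda(\theta,\phi)=0$. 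Conversely, given $\lambda(\theta,\phi)=0$, I would choose $\mu$ realizing this coboundary and define $\gamma\!\left(t(x)n\right)=t(\phi(x))\mu(x)\theta(n)$; reversing the same computation shows $\gamma$ is a homomorphism, and since $\theta,\phi$ are bijections the assignment $t(x)n\mapsto t(\phi(x))\mu(x)\theta(n)$ is visibly bijective, so $\gamma\in\Aut_N(G)$ with $\tau(\gamma)=(\theta,\phi)$.

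The only real work lies in the homomorphism verification in both directions of the last step, and this is precisely where centrality does the lifting: in a generic extension the term $t(\phi(y))^{-1}\mu(x)t(\phi(y))$ would reintroduce the $H$-action on $N$ and force a compatibility condition linking $\theta$ and $\phi$, whereas $N\le\Z(G)$ kills this term and decouples the two automorphisms. I therefore anticipate no serious obstacle beyond careful bookkeeping with the normalized cocycle identity. The one point to keep in mind is that $\lambda$ is only a morphism of pointed sets and need not be a homomorphism (cf. Remark \ref{remark}), so exactness at $\Aut(N)\times\Aut(H)$ must be read as $\operatorname{image}(\tau)=\lambda^{-1}(0)$ rather than as equality of the kernel and image of group homomorphisms.
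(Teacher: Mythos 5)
Your proposal is correct and follows essentially the same route as the paper: the paper likewise defines $\lambda(\theta,\phi)$ as the class of $(x,y)\mapsto \mu(\phi(x),\phi(y))\,\theta(\mu(x,y))^{-1}$ (the inverse of your representative, which is immaterial since $N$ is abelian) and proves exactness at $\Aut(N)\times\Aut(H)$ via the central-extension version of Wells' lemma (Lemma 2.2$'$), which packages exactly your two-way computation between the coboundary condition and the existence of $\gamma$. No further comment is needed.
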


\par\vspace{.25cm}

As a consequence of Theorem 1, we readily get the following result:

\par\vspace{.25cm}
\begin{cora}\label{cor1}
Let $N$ be an abelian normal subgroup of $G$ with $\Ha^2(G/N,\, N)$ trivial. Then
\begin{quote}
\begin{enumerate}
\item every element of $\C_1$ can be extended to an automorphism of $G$ centralizing $H$;
\item every element of $\C_2$ can be lifted to an automorphism of $G$ centralizing $N$.
\end{enumerate}\end{quote}
\end{cora}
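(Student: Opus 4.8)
The plan is to read off both conclusions directly from the two exact sequences furnished by Theorem 1, exploiting the hypothesis on the second cohomology. Since $H = G/N$, the assumption that $\Ha^2(G/N,\, N)$ is trivial says precisely that $\Ha^2(H,\, N) = 1$, and this group is the common right-hand term of the sequences \eqref{thma1} and \eqref{thma2}. So in both sequences the final arrow lands in a trivial target.

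For part (1) I would work with \eqref{thma1}. Because $\Ha^2(H,\, N)$ is trivial, the map $\lambda_1 \colon \C_1 \to \Ha^2(H,\, N)$ sends every element of $\C_1$ to the distinguished (trivial) cohomology class, so the set $\lambda_1^{-1}(\text{trivial class})$ is all of $\C_1$. Exactness of \eqref{thma1} at $\C_1$ then gives $\operatorname{im}(\tau_1) = \C_1$, i.e. $\tau_1$ is surjective. Unwinding the definition of $\tau_1$ as the restriction of $\tau$, a preimage of a given $\theta \in \C_1$ is an automorphism $\gamma \in \Aut_N^H(G)$ with $\gamma|_N = \theta$; by the very definition of $\Aut_N^H(G)$ this $\gamma$ induces the identity on $H$, which is exactly the required extension of $\theta$ to an automorphism of $G$ centralizing $H$. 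Part (2) is the mirror argument applied to \eqref{thma2}: triviality of $\Ha^2(H,\, N)$ forces $\lambda_2$ to be trivial, exactness at $\C_2$ yields surjectivity of $\tau_2 \colon \Aut^N(G) \to \C_2$, and a preimage of a given $\phi \in \C_2$ is an automorphism $\gamma \in \Aut^N(G)$ — hence one centralizing $N$ — that induces $\phi$ on $H$, i.e. the desired lift.

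The one point deserving care — and the reason I would spell out the exactness explicitly rather than invoke it as a mapping cone of homomorphisms — is that $\lambda_1$ and $\lambda_2$ need not be homomorphisms (Remark \ref{remark}). Accordingly I would read exactness at $\C_1$ and $\C_2$ set-theoretically: $\operatorname{im}(\tau_i)$ equals the $\lambda_i$-preimage of the trivial cohomology class, not a kernel in the homomorphism sense. Once the target is the trivial group this preimage is automatically the whole domain, so the failure of $\lambda_i$ to be a homomorphism never enters; it affects only the phrasing. There is thus no genuine obstacle, consistent with the claim that the corollary follows readily from Theorem 1.
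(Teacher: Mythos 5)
Your proposal is correct and matches the paper's intended argument: the paper states Corollary 3 as an immediate consequence of Theorem 1, and your reading --- triviality of $\Ha^2(G/N,\, N)$ forces the preimage of the trivial class under $\lambda_1$ (resp.\ $\lambda_2$) to be all of $\C_1$ (resp.\ $\C_2$), so exactness at the third term makes $\tau_1$ and $\tau_2$ surjective --- is exactly the one intended. Your explicit care about reading exactness set-theoretically, since $\lambda_1,\lambda_2$ need not be homomorphisms, is a welcome precision consistent with Remark \ref{remark}.
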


\par\vspace{.25cm}\noindent
Clearly, if $G$ is finite and the map $x \mapsto x^{|G/N|}$ is an isomorphism of $N$, then $\Ha^2(G/N,\, N) = 1$. In particular, for the class of finite groups $G$ such that $|G/N|$ is coprime to $|N|$, we have $\Ha^2(G/N,\, N) = 1$ and hence Corollary 3 holds true for this class of groups.

\par\vspace{.25cm}

In Section 3, we apply Theorem 1 to reduce the problem of lifting of automorphisms of $H$ to $G$ to the problem of lifting of automorphisms of Sylow subgroups of $H$ to automorphisms of their pre-images in $G$, and prove the following result:

\par\vspace{.25cm}

\begin{thmd}\label{theorem4}
Let $N$ be an abelian normal subgroup of a finite  group $G$. Then the following hold:
\begin{enumerate}
\item An automorphism $\phi$ of $G/N$ lifts to an automorphism of $G$ centralizing $N$
provided the restriction of $\phi$ to some Sylow $p$-subgroup $P/N$ of $G/N$, for each prime number $p$ dividing $|G/N|$, lifts to an automorphism of $P$ centralizing $N$.
\item If an automorphism $\phi$ of $G/N$ lifts to an automorphism of $G$ centralizing $N$, then the restriction of $\phi$ to a characteristic subgroup $P/N$ of $G/N$ lifts to an automorphism of $P$ centralizing $N$.
\end{enumerate}
\end{thmd}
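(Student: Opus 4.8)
The plan is to read both parts through the second exact sequence \eqref{thma2} of Theorem 1, applied both to the given extension $1 \to N \to G \stackrel{\pi}{\to} H \to 1$ (with $H = G/N$) and to each sub-extension $1 \to N \to P \stackrel{\pi}{\to} P/N \to 1$. By exactness of \eqref{thma2} at $\C_2$, an automorphism $\phi$ of $H$ lifts to an automorphism of $G$ centralizing $N$ if and only if $\phi \in \C_2$ and $\lambda_2(\phi) = 0$ in $\Ha^2(H,\,N)$; the same criterion, applied to the sub-extension, governs whether $\phi|_{P/N}$ lifts to an automorphism of $P$ centralizing $N$. Both parts thus reduce to comparing the obstruction $\lambda_2(\phi)$ with the obstructions of its Sylow restrictions.

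Part (2) is the easy direction. Suppose $\gamma \in \Aut^N(G)$ lifts $\phi$, and let $P/N$ be characteristic in $H$, so that $\phi(P/N) = P/N$. Since $\gamma$ centralizes $N$ it fixes $N = \Ker\pi$ setwise and induces $\phi$ on $H$, whence $\pi(\gamma(P)) = \phi(P/N) = P/N$ and $\gamma(P) \supseteq \gamma(N) = N$; therefore $\gamma(P) = \pi^{-1}(P/N) = P$. Then $\gamma|_P$ is an automorphism of $P$ centralizing $N$ and inducing $\phi|_{P/N}$, which is the required lift.

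For part (1), I would first verify the compatibility $\phi \in \C_2$. For each prime $p$ the hypothesis furnishes a $\phi$-invariant Sylow $p$-subgroup $P/N$ on which $\phi$ lifts, so $(1,\,\phi|_{P/N})$ is compatible for that sub-extension; unwinding the definition of compatibility with trivial first coordinate, this says exactly that $\alpha$ and $\alpha\circ\phi$ agree on $P/N$. One Sylow subgroup per prime generates $H$, since the order of the subgroup they generate is divisible by each $|P/N|$ and hence by $|H|$; as $\alpha$ and $\alpha\circ\phi$ are homomorphisms $H \to \Aut(N)$ agreeing on a generating set, they coincide, i.e. $\phi \in \C_2$. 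It then remains to prove $\lambda_2(\phi) = 0$. Using the factor set $f$ determined by $t$ via $t(x)t(y) = t(xy)f(x,y)$, the class $\lambda_2(\phi)$ from \eqref{lambda2} is represented by the $2$-cocycle $(x,y) \mapsto f(\phi(x),\phi(y))\,f(x,y)^{-1}$, which is a cocycle for the original $H$-action precisely because $\alpha\circ\phi = \alpha$. Restricting to a $\phi$-invariant Sylow $p$-subgroup $P/N$ and taking $t|_{P/N}$ as transversal exhibits $f|_{P/N \times P/N}$ as the factor set of the sub-extension, and a direct comparison yields the naturality identity $\operatorname{res}^{H}_{P/N}\lambda_2(\phi) = \lambda_2^{P}(\phi|_{P/N})$, the obstruction for the sub-extension. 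By hypothesis $\phi|_{P/N}$ lifts, so this class vanishes for every $p$. Since $\operatorname{cor}^{H}_{P/N}\circ\operatorname{res}^{H}_{P/N}$ is multiplication by the index $[H:P/N]$, which is prime to $p$, the map $\operatorname{res}^{H}_{P/N}$ is injective on the $p$-primary part of $\Ha^2(H,\,N)$; as these parts decompose $\Ha^2(H,\,N)$, vanishing of all Sylow restrictions forces $\lambda_2(\phi) = 0$, and $\phi$ lifts.

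The hard part is the naturality identity $\operatorname{res}^{H}_{P/N}\lambda_2(\phi) = \lambda_2^{P}(\phi|_{P/N})$: it requires the explicit factor-set description of $\lambda_2$ in \eqref{lambda2}, the observation that restricting the chosen transversal reproduces exactly the sub-extension's factor set and module structure, and care that $\phi^{*}f$ be read as a cocycle for the original $H$-action (this is where $\phi \in \C_2$ enters). The cohomological input — that a class in $\Ha^2(H,\,N)$ is trivial once its restriction to one Sylow $p$-subgroup for each $p$ is trivial — is standard, but should be invoked with the remark that conjugate Sylow subgroups give the same vanishing condition, so the particular $\phi$-invariant representatives supplied by the hypothesis suffice.
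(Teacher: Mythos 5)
Your proposal is correct and follows essentially the same route as the paper: part (2) by restricting the lift to the $\gamma$-invariant preimage $P$, and part (1) by verifying compatibility of $(1,\phi)$, identifying the restriction of the obstruction class $\lambda_2(\phi)=[k_{\phi}]$ with the obstruction of the sub-extension, and killing it via the standard $\operatorname{cor}\circ\operatorname{res}=[G/N:P/N]$ argument over all primes dividing $|G/N|$. The only difference is presentational: you spell out the compatibility and naturality steps that the paper asserts as ``easy to see,'' and you phrase the final cohomological step via injectivity of restriction on $p$-primary components rather than via coprimality of the indices, which is equivalent.
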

\par\vspace{.25cm}

We mention below two corollaries to illustrate Theorem 4. These corollaries show that, in many cases, the hypothesis of Theorem 4 is naturally satisfied.

\begin{corb}
 Let $N$ be an abelian normal subgroup of a finite  group $G$ such that $G/N$ is nilpotent. Then an automorphism $\phi$ of $G/N$ lifts to an automorphism of $G$ centralizing $N$ if, and only if the restriction of $\phi$ to each Sylow subgroup $P/N$ of $G/N$ lifts to an automorphism of $P$ centralizing $N$.
\end{corb}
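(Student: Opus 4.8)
The plan is to deduce both implications directly from Theorem 4, the only additional ingredient being the structure theory of finite nilpotent groups. First I would record the standard fact that a finite nilpotent group is the internal direct product of its Sylow subgroups, so that for each prime $p$ dividing $|G/N|$ there is a \emph{unique} Sylow $p$-subgroup of $G/N$, which is therefore characteristic in $G/N$. In particular, each such Sylow subgroup $P/N$ is characteristic in $G/N$, whence any automorphism $\phi$ of $G/N$ satisfies $\phi(P/N) = P/N$ and the restriction $\phi|_{P/N}$ is a well-defined automorphism of $P/N$. This observation is precisely what gives meaning to the phrase ``restriction of $\phi$ to each Sylow subgroup'' in the statement, and it is the bridge connecting the nilpotent hypothesis to the general Theorem 4.

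For the necessity (``only if'') direction, I would assume that $\phi$ lifts to an automorphism of $G$ centralizing $N$. Since each Sylow subgroup $P/N$ is characteristic in $G/N$ by the previous paragraph, Theorem 4(2) applies to the characteristic subgroup $P/N$ and yields directly that the restriction of $\phi$ to $P/N$ lifts to an automorphism of $P$ centralizing $N$. For the sufficiency (``if'') direction, I would assume that the restriction of $\phi$ to each Sylow subgroup $P/N$ lifts to an automorphism of $P$ centralizing $N$. Because $G/N$ is nilpotent, for every prime $p$ dividing $|G/N|$ there is exactly one Sylow $p$-subgroup, so the hypothesis of Theorem 4(1) --- that for each prime $p$ the restriction of $\phi$ to \emph{some} Sylow $p$-subgroup lifts --- is met; Theorem 4(1) then delivers a lift of $\phi$ to an automorphism of $G$ centralizing $N$.

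I do not expect any genuine obstacle in this argument, since the corollary is a direct specialization of Theorem 4. The single point requiring care is the reconciliation of quantifiers: Theorem 4(1) only asks that the restriction lift for \emph{some} Sylow $p$-subgroup per prime, whereas the corollary speaks of \emph{each} Sylow subgroup, and these two phrasings coincide exactly because nilpotency forces the Sylow subgroups to be unique (hence characteristic). Thus the proof reduces to invoking the uniqueness of Sylow subgroups in a finite nilpotent group and then citing the two parts of Theorem 4 in turn.
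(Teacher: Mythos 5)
Your proof is correct and follows exactly the route the paper intends: the paper presents Corollary 5 as an immediate illustration of Theorem 4, relying on the fact that in a finite nilpotent group $G/N$ the Sylow subgroups are unique and hence characteristic, so that Theorem 4(2) gives the ``only if'' direction and Theorem 4(1) the ``if'' direction. Your explicit attention to the quantifier reconciliation (``some'' versus ``each'' Sylow subgroup) is a sensible clarification of the same argument.
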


An automorphism $\alpha$ of a group $G$ is said to be \emph{commuting automorphism} if $x \alpha(x) = \alpha(x) x$ for all $x \in G$. It follows from \cite[Remark 4.2]{DS02} that each Sylow subgroup of a finite group $G$ is kept invariant by every commuting automorphism of $G$. Thus we have the following result:

\begin{corc}
  Let $N$ be an abelian normal subgroup of a finite  group $G$. Then a commuting automorphism $\phi$ of $G/N$ lifts to an automorphism of $G$ centralizing $N$ if, and only if the restriction of $\phi$ to each Sylow subgroup $P/N$ of $G/N$ lifts to an automorphism of $P$ centralizing $N$.
\end{corc}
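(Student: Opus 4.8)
The plan is to deduce Corollary 6 directly from Theorem 4, the only extra ingredient being the cited fact (Remark 4.2 of \cite{DS02}) that a commuting automorphism of a finite group leaves every Sylow subgroup invariant. First I would record what this gives in our setting: since $\phi$ is a commuting automorphism of $G/N$, it satisfies $\phi(P/N) = P/N$ for every Sylow subgroup $P/N$ of $G/N$. In particular each restriction $\phi|_{P/N}$ is a genuine automorphism of $P/N$, so the phrase ``the restriction of $\phi$ to $P/N$ lifts to an automorphism of $P$ centralizing $N$'' is meaningful, where throughout $P = \pi^{-1}(P/N)$ denotes the full pre-image of $P/N$ in $G$.

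For the \emph{if} direction I would simply invoke Theorem 4(1). Assuming the restriction of $\phi$ to each Sylow subgroup $P/N$ lifts to an automorphism of $P$ centralizing $N$, this holds in particular for one Sylow $p$-subgroup for each prime $p$ dividing $|G/N|$, and Theorem 4(1) then produces an automorphism of $G$ centralizing $N$ and inducing $\phi$ on $G/N$.

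The \emph{only if} direction is the point requiring care, because the Sylow subgroups $P/N$ are in general not characteristic in $G/N$, so Theorem 4(2) cannot be quoted verbatim. The resolution is that the argument behind Theorem 4(2) uses only the $\phi$-invariance of $P/N$, and this is exactly what the commuting hypothesis supplies. Concretely, let $\gamma \in \Aut^N(G)$ be a lift of $\phi$ centralizing $N$, and fix a Sylow subgroup $P/N$ with pre-image $P = \pi^{-1}(P/N)$. For $g \in P$ one has $\pi(\gamma(g)) = \phi(\pi(g)) \in \phi(P/N) = P/N$, whence $\gamma(g) \in \pi^{-1}(P/N) = P$; since $\gamma$ is an automorphism of the finite group $G$ and $\gamma(P) \subseteq P$, this forces $\gamma(P) = P$. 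Thus $\gamma|_P$ is an automorphism of $P$ which centralizes $N$ and induces $\phi|_{P/N}$ on $P/N$, giving the required lift. The main obstacle is precisely this gap between ``characteristic'' in Theorem 4(2) and ``$\phi$-invariant'' here; once the remark on commuting automorphisms is in hand the obstacle dissolves, and both implications reduce to the two parts of Theorem 4, with the restriction argument above playing the role of Theorem 4(2) for the (merely $\phi$-invariant) Sylow subgroups.
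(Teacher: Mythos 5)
Your proof is correct and follows exactly the route the paper intends: the corollary is stated without an explicit proof, being deduced from Theorem 4 together with the cited fact that a commuting automorphism keeps every Sylow subgroup invariant. Your observation that the argument for Theorem 4(2) uses only the $\phi$-invariance of $P/N$ (not that it is characteristic) is precisely the point the paper relies on tacitly, so nothing is missing.
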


\par\vspace{.25cm}

It may be remarked that if $N$ is an abelian subgroup of $G$, then the map $\omega$ constructed by Jin in \cite[Theorem A]{Jin} is trivial and hence, in our notation, gives the following exact sequence:

\[1 \to \Z_{\alpha}^1(H,\, N) \to \Aut_N^H(G) \to \C_1 \to 1\]
which, in turn, implies that any element of $\C_1$ can be extended to an automorphism of $G$ centralizing $H$. The latter statement,  however,  is not true, in general, as illustrated by the following class of examples:
\par
\vspace{.5cm}
\noindent Let $G_1$ be a finite $p$-group, where $p$ is an odd prime. Then there exists a group $G$ in the isoclinism class (in the sense of P. Hall \cite{Hall}) of $G_1$ such that $G$ has no non-trivial abelian direct factor. Let $N= Z(G)$, the centre of $G$.  Then it follows from \cite[Corollary 2]{Adney} that $|\Aut^H(G)|$ is $p^r$ for some $r \geq 1$. Let $\theta \in N$ be the automorphism inverting elements of $N$. Then the order of $\theta$ is 2 and therefore $\theta$ cannot be extended to an automorphism of $G$ centralizing $H$. Thus Theorem D of Jin does not shed any light in case $N$ is abelian. However, using our sequence (\ref{thma1}), one can see that the following result holds: 
\par
\vspace{.25cm}

\begin{thmg}
Let $N$ be an abelian normal subgroup of a finite group $G$. Then an automorphism $\theta$ of $N$ extends to an automorphism of $G$ centralizing $G/N$ if, and only if, for some Sylow $p$-subgroup $P/N$ of $G/N$, for each prime number $p$ dividing $|G/N|$,  $\theta$ extends to an automorphism of $P$ centralizing $P/N$.
\end{thmg}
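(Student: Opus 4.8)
The plan is to reduce the assertion to the vanishing of the cohomological obstruction $\lambda_1(\theta) \in \Ha^2(H,\,N)$ supplied by the sequence \eqref{thma1} of Theorem 1, and then to detect this vanishing one Sylow subgroup at a time. Throughout write $H = G/N$, and for a prime $p$ dividing $|H|$ let $P/N$ be a Sylow $p$-subgroup of $H$, so that $P = \pi^{-1}(P/N)$ is the corresponding subgroup of $G$ containing $N$. Restricting the transversal $t$ to $P/N$ exhibits $1 \to N \to P \to P/N \to 1$ as an abelian extension whose factor set is the restriction to $P/N \times P/N$ of the factor set of $G$. By exactness of \eqref{thma1} at $\C_1$, an automorphism $\theta$ of $N$ extends to an automorphism of $G$ centralizing $H$ precisely when $\theta \in \C_1$ and $\lambda_1(\theta) = 0$; the same criterion applies to each subextension, giving a map $\lambda_{1,P}$ on the associated $\C_1$.

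For the ``only if'' direction I would argue directly, without any cohomology. If $\theta$ extends to $\gamma \in \Aut_N^H(G)$, then $\gamma$ induces the identity on $H$, so $\pi\gamma = \pi$ and hence $\gamma(P) = \gamma(\pi^{-1}(P/N)) = \pi^{-1}(P/N) = P$ for every $P$ as above. Thus $\gamma|_P$ is an automorphism of $P$ that restricts to $\theta$ on $N$ and centralizes $P/N$, which yields the required extension over every Sylow $p$-subgroup (in particular over some one).

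For the ``if'' direction, assume that for each prime $p$ the automorphism $\theta$ extends to some $P$ centralizing $P/N$. Each such extension forces $\theta$ to be compatible for that subextension, i.e.\ $\theta$ commutes with $\alpha(x)$ in $\Aut(N)$ for all $x \in P/N$; since the set of $x \in H$ for which $\theta$ commutes with $\alpha(x)$ is a subgroup and the Sylow subgroups generate $H$, we conclude $(\theta,\,1) \in \C$, so $\theta \in \C_1$ and the class $\lambda_1(\theta)$ is defined. The heart of the matter is the naturality of $\lambda_1$ under restriction: reading off the cochain-level definition \eqref{lambda1} and using that the factor set of the subextension is the restriction of that of $G$, the obstruction $\lambda_{1,P}(\theta)$ for $P$ is represented by the restriction to $P/N \times P/N$ of a cocycle representing $\lambda_1(\theta)$, so that $\lambda_{1,P}(\theta) = \operatorname{res}^{H}_{P/N}\lambda_1(\theta)$ in $\Ha^2(P/N,\,N)$. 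Hence the hypothesis that $\theta$ extends over $P$ is, via Theorem 1 applied to $1 \to N \to P \to P/N \to 1$, exactly the statement $\operatorname{res}^{H}_{P/N}\lambda_1(\theta) = 0$.

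It then remains to invoke the classical fact that a class in $\Ha^2(H,\,N)$ is trivial if and only if its restriction to a Sylow $p$-subgroup is trivial for every prime $p$: the composite of restriction followed by the transfer (corestriction) is multiplication by the prime-to-$p$ index $[H : P/N]$, which is invertible on the $p$-primary component, so $\operatorname{res}^H_{P/N}$ is injective there, and since $N$ is finite the group $\Ha^2(H,\,N)$ is the direct sum of its $p$-primary components, so these restrictions jointly detect triviality. That any single Sylow $p$-subgroup may be used follows because the restrictions to conjugate subgroups correspond under the conjugation isomorphism on cohomology and hence vanish simultaneously. Combining, $\lambda_1(\theta) = 0$, and \eqref{thma1} produces $\gamma \in \Aut_N^H(G)$ extending $\theta$ and centralizing $H$. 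I expect the naturality of $\lambda_1$ to be the only step requiring genuine care, as it must be verified against the explicit definition \eqref{lambda1} and the compatibility of the chosen transversals; the Sylow detection principle is standard and the membership $\theta \in \C_1$ is routine bookkeeping.
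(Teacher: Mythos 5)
Your proposal is correct and matches the paper's intended argument: the paper omits the proof of Theorem 7, stating only that it is ``similar to'' the proof of Theorem 4, which is precisely your reduction via the sequence \eqref{thma1}, the naturality of the obstruction under restriction to $1 \to N \to P \to P/N \to 1$, and the restriction--corestriction (multiplication by the index $[G/N:P/N]$) argument detecting triviality of $\lambda_1(\theta)$ one prime at a time. Your added care about membership in $\C_1$ and the independence of the choice of Sylow subgroup up to conjugacy fills in details the paper leaves implicit.
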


\par\vspace{.25cm}

Finally, in Section 4, we refine our sequences \eqref{thma1} -  \eqref{thmb1}, and show that these sequences split in case the given exact sequence $1\to N\to G\to H\to 1$ splits  (Theorem 8). We also give examples to show that the converse is not true, in general.

\par\vspace{.5cm}

\section{Construction of sequences}

\par\vspace{.5cm}

Let $1 \to N \to G \stackrel{\pi}{\to} H \to 1$ be an abelian extension.
For any two elements $x,\, y \in H$, we have $\pi(t(xy))= xy = \pi(t(x))\pi(t(y))= \pi(t(x)t(y))$. Thus there exists a unique element (say) $\mu(x,\, y) \in N$ such that $t(xy)\mu(x,\,y)= t(x)t(y)$. Observe that  $\mu$ is a map from
$H \times H$ to $N$ such that $\mu(1,\,x) = \mu(x,\, 1) = 1$ and 
\begin{equation}\label{eqn1}
\mu(xy,\, z) \mu(x,\, y)^{t(z)} = \mu(x,\, yz) \mu(y,\, z),
\end{equation}
for all $x,\, y,\, z \in H$; in other words, $\mu: H\times H\to N$ is a normalized $2$-cocycle.
\para
We begin by recalling  a result of Wells \cite{Wells}; since we are dealing with abelian extensions, the proof of this  result in the present  case is quite easy. However, for the reader's convenience, we include a proof here.
\para
\begin{lemma}\label{lemma1}\cite{Wells}
Let $1 \to N \to G \stackrel{\pi}{\to} H \to 1$ be an abelian extension. If $\gamma \in \Aut_N(G)$, then there is a triplet $(\theta,\, \phi,\, \chi) \in \Aut(N)\times \Aut(H) \times N^H$ such that for all $x,\,~y \in H$ and $n \in N$ the following conditions are satisfied:
\begin{quote}
\begin{enumerate}
\item $\gamma(t(x)n) = t(\phi(x)) \chi(x) \theta(n),$\\
\item $\mu(\phi(x),\, \phi(y)) \theta(\mu(x,\, y)^{-1}) = (\chi(x)^{-1})^{t(\phi(y))} \chi(y)^{-1} \chi(xy),$\\
\item $\theta(n^{t(x)})=\theta(n)^{t(\phi(x))}.$
\end{enumerate}\end{quote}
$[$\,Here $N^H$ denotes the group of all maps $\psi$ from $H$ to $N$ such that $\psi(1) = 1$.\,$]$
\para
Conversely, if $(\theta,\, \phi,\, \chi) \in \Aut(N)\times \Aut(H) \times N^H$ is a triplet satisfying equations $(2)$ and $(3)$, then $\gamma $ defined by $(1)$ is an automorphism of $G$ normalizing $N$.
\end{lemma}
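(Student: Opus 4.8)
The plan is to prove both implications by direct computation, exploiting throughout the uniqueness of the expression $g = t(x)n$ and the fact that $N$ is abelian. For the forward direction, given $\gamma \in \Aut_N(G)$, I would first extract the triplet. Since $\gamma(N)=N$, restricting $\gamma$ to $N$ yields an automorphism $\theta$ of $N$, and passing to the quotient $G/N \simeq H$ gives an automorphism $\phi$ of $H$ determined by $\pi(\gamma(t(x))) = \phi(x)$. As $\gamma(t(x))$ projects to $\phi(x)$, the uniqueness of the transversal decomposition lets me write $\gamma(t(x)) = t(\phi(x))\chi(x)$ for a unique $\chi(x)\in N$; evaluating at $x=1$ gives $\chi(1)=1$, so $\chi \in N^H$. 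Condition (1) is then immediate from $\gamma(t(x)n)=\gamma(t(x))\gamma(n)$. To obtain (3) I apply $\gamma$ to $n^{t(x)}=t(x)^{-1}nt(x)$ and substitute these expressions; since the resulting factors all lie in the abelian group $N$, the conjugating factor $\chi(x)$ cancels and leaves $\theta(n^{t(x)})=\theta(n)^{t(\phi(x))}$. For (2) I apply $\gamma$ to the defining relation $t(xy)\mu(x,y)=t(x)t(y)$, expand both sides using (1) and the homomorphism property of $\phi$, slide each $\chi$ past the relevant transversal element (thereby introducing a conjugation by $t(\phi(y))$ and the cocycle $\mu(\phi(x),\phi(y))$), and compare the unique $N$-parts; rearranging in the abelian group $N$ yields exactly (2).

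For the converse I would define $\gamma$ by (1) and check that it is an automorphism normalizing $N$. That $\gamma$ normalizes $N$ is immediate, since $\gamma(n)=\theta(n)\in N$. The crux, and the main obstacle, is verifying that $\gamma$ is multiplicative. To this end I compute the product $t(x)m \cdot t(y)n = t(xy)\bigl(\mu(x,y)\,m^{t(y)}n\bigr)$, apply $\gamma$ via (1) and use (3) to rewrite $\theta(m^{t(y)})$ as $\theta(m)^{t(\phi(y))}$, and independently compute $\gamma(t(x)m)\gamma(t(y)n)$ by sliding $t(\phi(y))$ to the left, which produces $\mu(\phi(x),\phi(y))$ together with a conjugation by $t(\phi(y))$. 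Comparing the two $N$-parts reduces the required identity precisely to $\chi(xy)\theta(\mu(x,y)) = \mu(\phi(x),\phi(y))\,\chi(x)^{t(\phi(y))}\chi(y)$, which is merely a rearrangement of (2) in the abelian group $N$. Thus (2) and (3) are exactly the conditions needed to guarantee that $\gamma$ is a homomorphism.

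Finally, bijectivity follows formally: since $\phi$ permutes $H$ and $\theta$ permutes $N$, every $g\in G$ has a unique expression $t(\phi(x))m$ with $m\in N$, and for each such $g$ the equation $\chi(x)\theta(n)=m$ has a unique solution $n\in N$, so $\gamma$ is a bijection and hence lies in $\Aut_N(G)$. The only place requiring genuine care is the multiplicativity computation, where the cocycle $\mu$ and the $H$-action must be tracked simultaneously; the commutativity of $N$ is used repeatedly to reorder $N$-factors and is precisely what keeps this bookkeeping tractable.
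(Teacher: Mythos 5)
Your proposal is correct and follows essentially the same route as the paper: extract $(\theta,\phi,\chi)$ from the unique decomposition $g=t(x)n$, derive (2) from $\gamma$ applied to $t(xy)\mu(x,y)=t(x)t(y)$ and (3) from conjugation plus commutativity of $N$, then verify in the converse that (2) and (3) are precisely the conditions for multiplicativity of the map defined by (1), with bijectivity following from the bijectivity of $\theta$ and $\phi$ together with uniqueness of the transversal decomposition. No gaps; this matches the paper's argument step for step.
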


\begin{proof}
Every automorphism $\gamma \in \Aut_N(G)$, determines a pair $(\theta,\, \phi) \in \Aut(N) \times \Aut(H)$ such that $\gamma$ restricts to $\theta$ on $N$ and induces $\phi$ on $H$. For any $x \in H$, we have $\pi(\gamma(t(x))) = \phi(x)$. Thus 
\begin{equation}\label{eqn2}
\gamma(t(x)) = t(\phi(x)) \chi(x),
\end{equation}
for some element $\chi(x) \in N$. Since $\chi(x)$ is unique for a given $x \in H$, it follows that $\chi$ is a map from $H$ to $N$. Notice that $\chi(1) = 1$. Let $g \in G$. Then $g = t(x)n$ for some $x \in H$ and $n \in N$. Applying $\gamma$, we have 
\begin{equation}\label{eqn3}
\gamma(g) = \gamma(t(x)) \theta(n) = t(\phi(x)) \chi(x) \theta(n).
\end{equation}
\para
Let $x,\, y \in H$. Then $t(xy)\mu(x,\, y)= t(x)t(y)$. On applying $\gamma$ we get $\gamma(t(xy)) \theta(\mu(x,\, y))= \gamma(t(x)) \gamma(t(y)) $, since $\gamma$ restricts to $\theta$ on $N$. By \eqref{eqn2}, we have $\gamma(t(x)) = t(\phi(x)) \chi(x)$, $\gamma(t(y)) = t(\phi(y)) \chi(y)$ and $\gamma(t(xy)) = t(\phi(xy)) \chi(xy)$, and consequently
\[ t(\phi(xy)) \chi(xy)\theta(\mu(x,\, y)) =  t(\phi(x)) \chi(x) t(\phi(y)) \chi(y).\]
This, in turn, gives
\begin{equation}\label{eqn4}
\mu(\phi(x),\, \phi(y)) \theta(\mu(x,\, y)^{-1}) = (\chi(x)^{-1})^{t(\phi(y))} \chi(y)^{-1} \chi(xy).
\end{equation}
For $x \in H$ and $n \in N$, we have
\begin{eqnarray}
\theta(n^{t(x)}) &=& \gamma(n^{t(x)}) = \theta(n)^{\gamma(t(x))}\nonumber\\
&=& \theta(n)^{t(\phi(x))\chi(x)} = \theta(n)^{t(\phi(x))}\label{eqn5},
\end{eqnarray}
since $\theta$ is the restriction of $\gamma$ and $\chi(x)$ commutes with $\theta(n)$.
Thus, given an element $\gamma \in \Aut_N(G)$, there is a triplet $(\theta,\, \phi,\, \chi) \in \Aut(N)\times \Aut(H) \times N^H$ satisfying equations (1),\, (2) and (3).
\para
Conversely, let $(\theta,\, \phi,\, \chi) \in \Aut(N)\times \Aut(H) \times N^H$ be a triplet satisfying equations (2) and (3).  We proceed to verify that $\gamma $ defined by (1) is an automorphism of $G$ normalizing $N$. Let $g_1= t(x_1)n_1$ and
$g_2= t(x_2)n_2$ be elements of $G$, where $x_1,\,~x_2 \in H$ and $n_1,\,~n_2 \in N$. Then
{\setlength\arraycolsep{2pt}
\begin{eqnarray*}
\gamma(g_1g_2) &=& \gamma(t(x_1)n_1t(x_2)n_2)\nonumber\\
&=& \gamma(t(x_1x_2)\mu(x_1,\,x_2)n_1^{t(x_2)}n_2)\nonumber\\
&=& t(\phi(x_1x_2))\chi(x_1x_2)\theta(\mu(x_1,\,x_2)n_1^{t(x_2)}n_2)\nonumber\\
&=& t(\phi(x_1x_2))\mu(\phi(x_1),\, \phi(x_2))\chi(x_1)^{t(\phi(x_2))}\chi(x_2)\theta(n_1)^{t(\phi(x_2))}\theta(n_2) \nonumber\\
&=& t(\phi(x_1))\chi(x_1)\theta(n_1)t(\phi(x_2))\chi(x_2)\theta(n_2)\nonumber\\
&=& \gamma(g_1) \gamma(g_2).
\end{eqnarray*}}
\noindent Hence $\gamma$ is a homomorphism. Let $g= t(x)n$ be an element of $G$. Since $\phi$ and $\theta$ are onto, there exist elements $x' \in H$ and $n' \in N$ such that $\phi(x')=x$ and $\theta(n')=n$. We then have $\gamma(t(x')\theta^{-1}(\chi(x')^{-1})n')= g$. Hence $\gamma$ is onto.
\para Finally, let $\gamma(t(x)n)=1$. Then $t(\phi(x)) \in N$, and it easily follows that  $t(x)n=1$; consequently $\gamma$ is one-one. Also $\gamma(n)=\theta(n)$ for $n \in N$. Therefore $\gamma \in \Aut_N(G)$.  $\Box$
\end{proof}
\begin{remark}
If  $1 \to N \to G \stackrel{\pi}{\to} H \to 1$ is a central extension, then the action of $H$ on $N$ becomes trivial, and therefore Lemma \ref{lemma1} takes the following simpler form which we will use in the proof of Theorem 2.
\end{remark}
\para
\noindent {\bf Lemma 2.2$'$.} \textit{Let $1 \to N \to G \stackrel{\pi}{\to} H \to 1$ be a central extension. If $\gamma \in \Aut_N(G)$, then there exists a triplet $(\theta,\, \phi,\, \chi) \in \Aut(N)\times \Aut(H) \times N^H$ such that for all $x,\,~y \in H$ and $n \in N$ the following conditions are satisfied:}
\vspace{.8mm}
\begin{quote}
(1$'$) $\gamma(t(x)n) = t(\phi(x)) \chi(x) \theta(n),$\\
\\
(2$'$) $\mu(\phi(x),\, \phi(y)) \theta(\mu(x,\, y)^{-1}) = \chi(x)^{-1} \chi(y)^{-1} \chi(xy).$
\end{quote}
\para
\textit{Conversely, if $(\theta,\, \phi,\, \chi) \in \Aut(N)\times \Aut(H) \times N^H$ is a triplet satisfying equation} (2$'$),\, \textit{then $\gamma $ defined by} (1$'$) \textit{is an automorphism of $G$ normalizing $N$.}
\par\vspace{3mm}

For $\theta \in \C_1$ and $\phi \in \C_2$, we define maps $k_{\theta},~ k_{\phi}:H \times H \to N$ by setting, for $x,\,~y \in H$, 
$$k_{\theta}(x,\,y)= \mu(x,\, y) \theta(\mu(x,\, y)^{-1})$$
 and
 $$k_{\phi}(x,\,y)= \mu(\phi(x),\, \phi(y))\mu(x,\, y)^{-1}.$$ 
Notice that, for $\theta \in \C_1$, we have $\theta(n^{t(x)})=\theta(n)^{t(x)}$ for all $x \in H$ and $n \in N$. Similarly, for $\phi \in \C_2$, we have $n^{t(x)}= n^{t(\phi(x))}$ for all $x \in H$ and $n \in N $.

\par\vspace{.25cm}

\begin{lemma}\label{lemma2}
The maps $k_{\theta}$ and $k_{\phi}$ are normalized $2$-cocycles.
\end{lemma}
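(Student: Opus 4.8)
The plan is to verify directly that each of $k_\theta$ and $k_\phi$ is normalized and satisfies the $2$-cocycle identity
$$f(xy,\,z)\,f(x,\,y)^{t(z)} = f(x,\,yz)\,f(y,\,z), \qquad x,\,y,\,z \in H,$$
taken with respect to the right $H$-module structure on $N$; this is exactly the shape of condition \eqref{eqn1} already known for $\mu$. Normalization is immediate: since $\mu(1,\,x)=\mu(x,\,1)=1$ and $\theta$ is a homomorphism, one gets $k_\theta(1,\,x)=k_\theta(x,\,1)=1$, while $\phi(1)=1$ gives $k_\phi(1,\,x)=k_\phi(x,\,1)=1$. So the substance is the cocycle identity, and for each map I would reduce it to \eqref{eqn1} by grouping terms, which is legitimate because $N$ is abelian.

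For $k_\theta$ I would use two facts: that $N$ is abelian, so $k_\theta(x,\,y)=\mu(x,\,y)\,\theta(\mu(x,\,y))^{-1}$, and that $\theta\in\C_1$, which by the observation preceding the statement means $\theta(n^{t(z)})=\theta(n)^{t(z)}$, i.e. $\theta$ commutes with the $H$-action. Expanding the left-hand side $k_\theta(xy,\,z)\,k_\theta(x,\,y)^{t(z)}$, distributing the action of $t(z)$ over the abelian group $N$, and using $\theta(\mu(x,\,y))^{t(z)}=\theta(\mu(x,\,y)^{t(z)})$ together with the fact that $\theta$ is a homomorphism, I would collect the ``$\mu$-part'' and the ``$\theta(\mu)$-part'' separately to obtain
$$k_\theta(xy,\,z)\,k_\theta(x,\,y)^{t(z)} = w\,\theta(w)^{-1}, \qquad w:=\mu(xy,\,z)\,\mu(x,\,y)^{t(z)}.$$
The right-hand side $k_\theta(x,\,yz)\,k_\theta(y,\,z)$ collapses in the same way to $w'\,\theta(w')^{-1}$ with $w'=\mu(x,\,yz)\,\mu(y,\,z)$, and $w=w'$ by \eqref{eqn1}. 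Hence the two sides agree.

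For $k_\phi$ the relevant facts are again commutativity of $N$, so $k_\phi(x,\,y)=\mu(\phi(x),\,\phi(y))\,\mu(x,\,y)^{-1}$, together with the defining property of $\phi\in\C_2$, namely $n^{t(z)}=n^{t(\phi(z))}$ for all $n\in N$, $z\in H$. Expanding the cocycle identity, using $\phi(xy)=\phi(x)\phi(y)$, the key step is to rewrite the twisted term $\mu(\phi(x),\,\phi(y))^{t(z)}$ as $\mu(\phi(x),\,\phi(y))^{t(\phi(z))}$ via $\C_2$; the ``$\phi(\mu)$-part'' of the left-hand side then becomes $\mu(\phi(x)\phi(y),\,\phi(z))\,\mu(\phi(x),\,\phi(y))^{t(\phi(z))}$, which is precisely the left side of \eqref{eqn1} at the triple $(\phi(x),\,\phi(y),\,\phi(z))$ and so equals $\mu(\phi(x),\,\phi(yz))\,\mu(\phi(y),\,\phi(z))$, while the ``$\mu$-part'' is handled by \eqref{eqn1} at $(x,\,y,\,z)$. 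Both sides thus reduce to $u\,v^{-1}$ with $u=\mu(\phi(x),\,\phi(yz))\,\mu(\phi(y),\,\phi(z))$ and $v=\mu(x,\,yz)\,\mu(y,\,z)$, and they coincide.

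The only genuine obstacle is bookkeeping: keeping the twisting exponents $t(z)$ versus $t(\phi(z))$ straight and invoking the compatibility hypotheses ($\theta\in\C_1$ for $k_\theta$, $\phi\in\C_2$ for $k_\phi$) at exactly the places where the module action occurs. Once the terms are regrouped — permissible since $N$ is abelian — each identity collapses onto the cocycle identity \eqref{eqn1} for $\mu$, which completes the verification.
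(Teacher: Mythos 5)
Your proposal is correct and follows essentially the same route as the paper's proof: expand the cocycle identity, regroup the $\mu$-part and the $\theta(\mu)$- (resp.\ $\mu\circ(\phi\times\phi)$-) part using commutativity of $N$, invoke the compatibility conditions $\theta(n^{t(z)})=\theta(n)^{t(z)}$ and $n^{t(z)}=n^{t(\phi(z))}$ where the module action appears, and reduce both sides to \eqref{eqn1}. In fact you make explicit the replacement of $t(z)$ by $t(\phi(z))$ in the $k_\phi$ case, a step the paper's displayed computation uses silently.
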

\para
\begin{proof}
For $x,\, ~y,\, ~z \in H$, we have
{\setlength\arraycolsep{2pt}
\begin{eqnarray*}
k_{\theta}(xy,\, z) k_{\theta}(x,\, y)^{t(z)} &=& \mu(xy,\, z) \theta(\mu(xy,\, z)^{-1}) (\mu(x,\, y) \theta(\mu(x,\, y)^{-1}))^{t(z)}\\
&=& \mu(xy,\, z)\mu(x,\, y)^{t(z)} \theta(\mu(xy,\, z)^{-1}(\mu(x,\, y)^{-1})^{t(z)})\\
&=& \mu(x,\, yz) \mu(y,\, z)\theta(\mu(x,\, yz)^{-1} \mu(y,\, z)^{-1})~~\textrm{by}~~ \eqref{eqn1}\\
&=& \mu(x,\, yz) \theta(\mu(x,\, yz)^{-1}) \mu(y,\, z) \theta(\mu(y,\, z)^{-1})\\
&=& k_{\theta}(x,\, yz)k_{\theta}(y,\, z),
\end{eqnarray*}}
 and $k_{\theta}(x,\,1)=1= k_{\theta}(1,\,x)$. Hence $k_{\theta}$ is a normalized $2$-cocycle.
\para
 We next show that $k_{\phi}$ is a normalized $2$-cocycle. For  $x,\, ~y,\, ~z \in H$, we have
{\setlength\arraycolsep{2pt}
\begin{eqnarray*}
k_{\phi}(xy,\, z) k_{\phi}(x,\, y)^{t(z)} &=& \mu(\phi(xy),\, \phi(z)) \mu(xy,\, z)^{-1} (\mu(\phi(x),\, \phi(y)) \mu(x,\, y)^{-1})^{t(z)}\\
&=& \mu(\phi(xy),\, \phi(z))\mu(\phi(x),\, \phi(y))^{t(z)}\mu(xy,\, z)^{-1}(\mu(x,\, y)^{-1})^{t(z)}\\
&=& \mu(\phi(x),\, \phi(yz)) \mu(\phi(y),\, \phi(z)) \mu(x,\, yz)^{-1} \mu(y,\, z)^{-1}~~\textrm{by}~~ \eqref{eqn1} \\
&=& \mu(\phi(x),\, \phi(yz))\mu(x,\, yz)^{-1} \mu(\phi(y),\, \phi(z))\mu(y,\, z)^{-1}\\
&=& k_{\phi}(x,\, yz) k_{\phi}(y,\, z),
\end{eqnarray*}}
and $k_{\phi}(x,\,1)=1= k_{\phi}(1,\,x)$. Thus  the map $k_{\phi}$ is a normalized 2-cocycle. This completes the proof of the lemma.  $\Box$
\end{proof}
\para
Define $\lambda_1:\C_1 \to \Ha^2(H,\, N)$ by setting, for $\theta\in \C_1$, 
\begin{equation}\label{lambda1}
\lambda_1(\theta)= [k_{\theta}],\,~ \textrm{the cohomology class of}~ k_{\theta};
\end{equation}
similarly, define $\lambda_2:\C_2 \to \Ha^2(H,\, N)$ by setting, for $\phi\in \C_2$, 
\begin{equation}\label{lambda2}
\lambda_2(\phi)= [k_{\phi}],\,~ \textrm{the cohomology class of}~ k_{\phi}.
\end{equation}

To justify this definition, we need the following:
\para
\begin{lemma}\label{lemma3}
The maps $\lambda_1$ and $\lambda_2$ are well-defined.
\end{lemma}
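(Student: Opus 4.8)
The plan is to show that the cohomology classes $[k_\theta]$ and $[k_\phi]$ appearing in \eqref{lambda1} and \eqref{lambda2} are independent of the choice of left transversal $t$, since this is the only choice entering their construction: the cocycle $\mu$ is defined from $t$, and $k_\theta,\,k_\phi$ are in turn built from $\mu$. (That $k_\theta$ and $k_\phi$ are $2$-cocycles, so that $[k_\theta]$ and $[k_\phi]$ make sense for a fixed $t$, is Lemma \ref{lemma2}.) So I would fix $\theta\in\C_1$ and $\phi\in\C_2$ and compare the classes obtained from $t$ with those obtained from a second left transversal $t'$ with $t'(1)=1$. Writing $t'(x)=t(x)\rho(x)$ with $\rho:H\to N$, $\rho(1)=1$, a short computation starting from $t'(xy)\mu'(x,y)=t'(x)t'(y)$ shows that the cocycle $\mu'$ attached to $t'$ is $\mu'(x,y)=\mu(x,y)\,\sigma(x,y)$, where $\sigma:=\delta\rho$ is the coboundary $\sigma(x,y)=\rho(x)^{t(y)}\rho(y)\rho(xy)^{-1}$. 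All the rearrangements below are legitimate because $N$ is abelian.

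For $\lambda_1$, substituting $\mu'$ into the definition of $k_\theta$ gives $k'_\theta(x,y)=k_\theta(x,y)\,\sigma(x,y)\theta(\sigma(x,y))^{-1}$, so I must exhibit the correction factor $\sigma\,\theta(\sigma)^{-1}$ as a coboundary. This is where the hypothesis $\theta\in\C_1$ is used: it gives $\theta(n^{t(x)})=\theta(n)^{t(x)}$, i.e. $\theta$ commutes with the $H$-action. Setting $f(x):=\rho(x)\theta(\rho(x))^{-1}$ and pushing $\theta$ past the action via this identity, I obtain $(\delta f)(x,y)=\sigma(x,y)\theta(\sigma(x,y))^{-1}$, whence $k'_\theta=k_\theta\cdot\delta f$ and therefore $[k'_\theta]=[k_\theta]$.

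For $\lambda_2$, the same substitution yields $k'_\phi(x,y)=k_\phi(x,y)\,\sigma(\phi(x),\phi(y))\sigma(x,y)^{-1}$, so now I must show that $\sigma(\phi(\cdot),\phi(\cdot))\,\sigma^{-1}$ is a coboundary. Here the hypothesis $\phi\in\C_2$ enters, in the form $n^{t(x)}=n^{t(\phi(x))}$, which says that $t(\phi(y))$ and $t(y)$ act identically on $N$. Putting $g(x):=\rho(\phi(x))$ and using $\phi(xy)=\phi(x)\phi(y)$ together with this identification of the two actions, I find $(\delta g)(x,y)=\sigma(\phi(x),\phi(y))$; since $\delta$ is a homomorphism on $N^H$ (again because $N$ is abelian), this gives $\sigma(\phi(\cdot),\phi(\cdot))\,\sigma^{-1}=\delta(g\rho^{-1})$, hence $k'_\phi=k_\phi\cdot\delta(g\rho^{-1})$ and $[k'_\phi]=[k_\phi]$.

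I expect the main obstacle to be precisely these two coboundary identifications, rather than the bookkeeping: the transversal-change formula for $\mu$ and the expansions of $k'_\theta,\,k'_\phi$ are routine once $N$ is abelian, but recognizing $\sigma\,\theta(\sigma)^{-1}$ and $\sigma(\phi(\cdot),\phi(\cdot))\,\sigma^{-1}$ as $\delta f$ and $\delta(g\rho^{-1})$ rests on the compatibility conditions, namely that $\theta$ commutes with the action and that $t(\phi(x))$ and $t(x)$ induce the same action. Without compatibility these correction factors need not be coboundaries and the classes would genuinely depend on $t$, so the defining conditions of $\C_1$ and $\C_2$ are exactly what makes $\lambda_1$ and $\lambda_2$ well defined.
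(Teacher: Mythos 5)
Your proposal is correct and is essentially the paper's own argument: both compare the cocycles arising from two transversals differing by a map $H\to N$, and both use the compatibility conditions ($\theta$ commuting with the $H$-action for $\lambda_1$, and $n^{t(x)}=n^{t(\phi(x))}$ for $\lambda_2$) to exhibit the resulting correction factors as coboundaries. The only cosmetic difference is that you package the correction as a single coboundary $\delta f$ (resp.\ $\delta(g\rho^{-1})$), whereas the paper shows $\mu\nu^{-1}$ and its image under $\theta$ (resp.\ its pullback along $\phi$) are each coboundaries separately.
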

\par
\begin{proof}
To show that the maps $\lambda_1$ and $\lambda_2$ are well-defined, we need to show that these maps are independent of the choice of transversals. Let $t, ~s:H \to N$ be two transversals with $t(1)=1=s(1)$. Then there exist maps $\mu, ~\nu:H \times H \to N$ such that for $x,\, y \in H$ we have $t(xy)\mu(x,\,y)=t(x)t(y)$ and $s(xy)\nu(x,\,y)= s(x)s(y)$. For $x \in H$, since $t(x)$ and $s(x)$ satisfy $\pi(t(x))=x=\pi(s(x))$, there exists a unique element (say) $\lambda(x) \in N$ such that $t(x)= s(x) \lambda(x)$. We thus have a map $\lambda: H \to N$ with $\lambda(1)= 1$. For $x,\,~ y \in H$,\, $t(xy)=s(xy)\lambda(xy)$. This gives $t(x)t(y)\mu(x,\,y)^{-1}= s(x)s(y)\nu(xy)^{-1}\lambda(xy)$. Putting $t(u)= s(u) \lambda(u)$, where $u=x, ~y$, we have $\lambda(x)^{s(y)}\lambda(y) \lambda(xy)^{-1}= \mu(x,\,y)\nu(x,\,y)^{-1}$. Since $\lambda(1)=1$,  $\mu(x,\,y)\nu(x,\,y)^{-1} \in \B^2(H,\, N)$, the group of 2-coboundaries.
\para
Similarly,
{\setlength\arraycolsep{2pt}
\begin{eqnarray*}
\theta(\mu(x,\,y))\theta(\nu(x,\,y)^{-1}) &=&  \theta( \mu(x,\,y)\nu(x,\,y)^{-1})\\
&=& \theta(\lambda(x)^{S(y)}\lambda(y)\lambda(xy)^{-1})\\
&=& \theta(\lambda(x)^{S(y)}) \theta(\lambda(y)) \theta(\lambda(xy)^{-1})\\
&=& \theta(\lambda(x))^{S(y)} \theta(\lambda(y)) \theta(\lambda(xy)^{-1})\\
&=& \lambda'(x)^{S(y)} \lambda'(y) \lambda'(xy)^{-1} \in \B^2(H,\, N),
\end{eqnarray*}}
where $\lambda' = \theta \lambda$. This proves that $\lambda_1$ is independent of the choice of a transversal.
\para
Next we prove that $\lambda_2$ is well-defined. It is sufficient to show that 
\[\mu(\phi(x),\, \phi(y))\nu(\phi(x),\, \phi(y))^{-1} \in \B^2(H,\, N).\]
Just as above, we have 
\[\lambda(\phi(x))^{s(\phi(y))}\lambda(\phi(y)) \lambda(\phi(xy))^{-1}= \mu(\phi(x),\, \phi(y))\nu(\phi(x),\, \phi(y))^{-1}.\] Putting $\lambda \phi= \lambda''$, we get 
\[\lambda''(x)^{S(\phi(y))}\lambda''(y) \lambda''(xy)^{-1}= \mu(\phi(x),\, \phi(y))\nu(\phi(x),\, \phi(y))^{-1}.\] Since $n^{S(x)}= n^{S(\phi(x))}$ and $\lambda''(1)=1$,  $\mu(\phi(x),\, \phi(y))\nu(\phi(x),\, \phi(y))^{-1} \in \B^2(H,\, N)$. This proves that $\lambda_2$ is also independent of the choice of a transversal, and the proof of the Lemma is complete.  $\Box$

\end{proof}
\para
\noindent \textbf{Proof of Theorem 1.} Let $1 \to N \to G \to H \to 1$ be an abelian extension. Clearly both the sequences \eqref{thma1} and \eqref{thma2} are exact at the first two terms. To complete the proof it only remains to show the exactness at the third term of the respective sequences.
\para
First consider \eqref{thma1}. Let $\gamma \in \Aut_N^H(G)$. Then $\theta \in \C_1$, where $\theta$ is the restriction of $\gamma$ to $N$. For $x,\,~y \in H$, we have, by Lemma \ref{lemma1}(2), $k_{\theta}(x,\, y) =  (\chi(x)^{-1})^{t(y)} \chi(y)^{-1} \chi(xy)$. Thus   $k_{\theta} \in \B^2(H,\, N)$ and hence $\lambda_1(\theta)= 1$. Conversely, if $\theta \in \C_1$ is such that  $\lambda_1(\theta)= 1$, then for $x,\,~y \in H$, we have  $k_{\theta}(x,\,y)=  (\chi(x)^{-1})^{t(y)}\chi(y)^{-1} \chi(xy)$,
where $\chi : H \to N$ with $\chi(1) = 1$. Therefore $\gamma$ defined by Lemma \ref{lemma1}(1) is an element of $\Aut_N^H(G)$. Hence the sequence \eqref{thma1} is exact.
\para
Next let us consider the sequence \eqref{thma2}. Let $\gamma \in \Aut^N(G)$. Then  $ \phi \in \C_2$, where $\phi$ is induced by $\gamma$ on $H$. For $x,\,~y \in H$, we have  $k_{\phi}(x,\, y) =  (\chi(x)^{-1})^{t(\phi(y))} \chi(y)^{-1} \chi(xy)$
by Lemma \ref{lemma1}(2). Since $n^{t(\phi(y))} = n^{t(y)}$ for all $n \in N$ and $y \in H$, we have $k_{\phi} \in \B^2(H,\, N)$ and hence $\lambda_2(\phi)= 1$. 
\para
Conversely, if $\phi \in \C_2$ is such that  $\lambda_2(\phi)=1$, then, for $x,\,~y \in H$, we have  $k_{\phi}(x,\,y)=  (\chi(x)^{-1})^{t(y)}\chi(y)^{-1} \chi(xy)$, where $\chi : H \to N$ is a map with $\chi(1) = 1$. Therefore $\gamma$ defined by Lemma \ref{lemma1}(1) is an element of $\Aut_H^N(G)$. Hence the sequence \eqref{thma2} is exact, and the proof of Theorem 1 is complete.   $\Box$
\Para
\noindent \textbf{Proof of Theorem 2.}
The sequence \eqref{thmb1} is clearly exact at $\Aut^{N,\, H}$ and $\Aut_N(G)$. We construct the map $\lambda$, and show the exactness at $\Aut(N) \times \Aut(H)$. For $(\theta,\, \phi) \in \Aut(N)\times \Aut(H)$, define $k_{\theta,\, \phi}: H \times H \to N$ by setting, for  $x,\,~y \in H$, 
$$k_{\theta,\, \phi}(x,\,y)= \mu(\phi(x),\, \phi(y)) \theta(\mu(x,\, y))^{-1}.$$
Observe  that for $x,\,~y,\,~z \in H$, we have $k_{\theta,\, \phi}(x,\,1)=1= k_{\theta,\, \phi}(1,\,x)$ and 
\[k_{\theta,\, \phi}(xy,\,z) k_{\theta,\, \phi}(x,\, y) = k_{\theta,\, \phi}(x,\, yz)k_{\theta,\, \phi}(y,\, z).\]
Thus  $k_{\theta,\, \phi} \in \Z^2(H,\, N)$, the group of normalized 2-cocycles. Define $\lambda(\theta,\, \phi)= [k_{\theta,\, \phi}]$, the cohomology class of $k_{\theta,\, \phi}$ in $H^2(H,\,N)$.  Proceeding as in the proof of Lemma \ref{lemma3}, one can prove that $\lambda$ is well-defined.
If $(\theta,\, \phi) \in \Aut(N)\times \Aut(H)$ is induced by some $\gamma \in \Aut_N(G)$, then by Lemma 2.2$'$, we have
$$k_{\theta,\, \phi}(x,\,y)= \chi(x)^{-1}\chi(y)^{-1}\chi(xy),$$
where $\chi:H \to N$ is a map with $\chi(1)=1$. Thus $k_{\theta,\, \phi}(x,\,y) \in \B^2(H,\, N)$. Hence $\lambda (\theta,\, \phi)= 1$.

\para Conversely,\, if $(\theta,\, \phi) \in \Aut(N)\times \Aut(H)$ is such that  $[k_{\theta,\, \phi}]= 1$, then $k_{\theta,\, \phi}(x,\,y)=\chi(x)^{-1} \chi(y)^{-1} \chi(xy)$ for some $\chi:H \to N$ with $\chi(1)=1$. By Lemma 2.2$'$ there exist $\gamma \in \Aut_N(G)$ inducing $\theta$ and $\phi$. Thus the sequence \eqref{thmb1} is exact.  $\Box$

\par\vspace{.25cm}
\begin{remark}\label{remark}
The maps $\lambda_1$ and $\lambda_2$ are not homomorphisms, but they turn out to be  derivations with respect to natural actions of $\C_1$ and $\C_2$ respectively on $\Ha^2(H,\, N)$. There is an action of $\C_1$ on $\Z^2(H,\, N)$ given by $(\theta,\, k) \mapsto k^{\theta}$ for $\theta \in \C_1$ and $k \in \Z^2(H,\, N)$, where $k^{\theta}(x,\,y)= \theta\big( k(x,\, y)\big)$
for $x,\,~ y \in H$. Note that if $k \in \B^2(H,\, N)$, then $k(x,\, y)=  \chi(x)^{t(y)}\chi(y) \chi(xy)^{-1}$. Since $\theta(\chi(x)^{t(y)})= \theta(\chi(x))^{t(y)}$, we have $$k^{\theta}(x,\,y)=  \theta(\chi(x))^{t(y)}\theta(\chi(y))\theta(\chi(xy))^{-1}.$$ Putting $\chi'= \theta \chi$, we have $k^{\theta}(x,\,y)= \chi'(x)^{t(y)}\chi'(y)\chi'(xy)^{-1}$. Thus the action keeps $\B^2(H,\, N)$ invariant and hence induces an action on $\Ha^2(H,\, N)$ given by $(\theta,\, [k]) \mapsto [k^{\theta}]$.
\para
One can see that for $\theta_1,\,~ \theta_2,\in \C_1$ and $x,\,~y \in H$, we have
{\setlength\arraycolsep{2pt}
\begin{eqnarray*}
k_{\theta_1\theta_2,\, 1}(x,\,y) &=&  \mu(x,\, y) \theta_1 \theta_2(\mu(x,\, y)^{-1})\\
&=& \mu(x,\, y) \theta_1(\mu(x,\, y)^{-1})\theta_1(\mu(x,\, y) \theta_2(\mu(x,\, y)^{-1}))\\
&=& k_{\theta_1,\, 1}(x,\,y)k_{\theta_2,\, 1}^{\theta_1}(x,\, y)
\end{eqnarray*}}
\noindent Thus $k_{\theta_1\theta_2,\, 1}= k_{\theta_1,\, 1}k_{\theta_2,\, 1}^{\theta_1}$ and hence $\lambda_1(\theta_1\theta_2)= \lambda_1(\theta_11)\lambda_1(\theta_2)^{\theta_1}$. Consequently  $\lambda_1$ is a derivation with respect to this action.
\para
Similarly, there is an action of $\C_2$ on $\Z^2(H,\, N)$ given by $(\phi,\, k) \mapsto k^{\phi}$ for $\phi \in \C_2$ and $k \in \Z^2(H,\, N)$, where $k^{\phi}(x,\,y)= k(\phi(x),\, \phi(y))$ for $x,\,~ y \in H$. If $k \in \B^2(H,\, N)$, then $k(x,\, y)= \chi(x)^{t(y)}\chi(y)$ $\chi(xy)^{-1}$. Since $\chi(\phi(x))^{t(\phi(y))}= \chi(\phi(x))^{t(y)}$, we have $$k^{\phi}(x,\,y)=  \chi(\phi(x))^{t(y)}\chi(\phi(y))\chi(\phi(xy))^{-1}.$$ Putting $\chi'=  \chi \phi$, we have $k^{\phi}(x,\,y)= \chi'(x)^{t(y)}\chi'(y)\chi'(xy)^{-1}$. Thus the action keeps $\B^2(H,\, N)$ invariant and hence induces an action on $\Ha^2(H,\, N)$ given by $(\phi,\, [k]) \mapsto [k^{\phi}]$. Just as above, one can see that $\lambda_2$ is a derivation with respect to this action.

\end{remark}

\par
\vspace{.5cm}

\section{Applications}

\par\vspace{.5cm}

In this section we give some applications of our exact sequences to lifting and extension of automorphisms in abelian extensions. 

\par\vspace{.5cm}
\noindent \textbf{Proof of Theorem 4.} (1) Let $N$ be an abelian normal subgroup of a finite  group $G$. 
Suppose that the restriction $\phi|_P$ of $\phi$ to any Sylow subgroup $P/N$ of $G/N$ lifts to an automorphism of $P$ centralizing $N$. Then the pair  $(1,\, \phi|_P)$ is compatible, and hence, as is easy to see,  $(1,\, \phi)$ is also compatible. Applying sequence \eqref{thma2} of Theorem 1 to the abelian extension $1 \to N \to P \to P/N \to 1$, we have that the cohomology class $[k_{\phi|_P}]=1$ in $\Ha^2(P/N,\, N)$. It follows from the construction of the cochain complex defining the group cohomology, that the map $\Ha^2(G/N,\, N) \to \Ha^2(P/N,\, N)$ induced by the inclusion $P/N \hookrightarrow G/N$ maps the class $[k_{\phi}]$ to $[k_{\phi|_P}]$. However,  by \cite[Chapter III,  Proposition 9.5 (ii)]{Brown}, we have $[G/N:P/N][k_{\phi}]=1$. Since this holds for at least one Sylow $p$-subgroups $P/N$ of $G/N$, for each prime number $p$ dividing $|G/N|$, it follows that $[k_{\phi}]=1$. By exactness of sequence \eqref{thma2} of Theorem 1, $\phi$ lifts to an automorphism $\gamma$ of $G$ centralizing $N$.
\para
\noindent(2) Let $\gamma$ be a lift of $\phi$ to an automorphism of $G$ centralizing $N$. To complete the proof  it only needs to be observed that if $P/N$ is a characteristic subgroup of $G/N$, then $P$ is invariant under $\gamma$. So the restriction of $\gamma$ to $P$ is the required lift.  
$\Box$
\para

The proof of Theorem 7 is similar to the above proof and we omit the details.

\Para
 For the case of central extensions, the sequence \eqref{thmb1} yields the following result:

\para\begin{cor}
Let $N$ be a central subgroup of a finite group $G$. Then a pair $(\theta,\, \phi) \in \Aut(N) \times \Aut(G/N)$ lifts to an automorphism of $G$ 
provided for some Sylow $p$-subgroup $P/N$ of $G/N$, for each prime number $p$ dividing $|G/N|$,\, $(\theta,\, \phi|_P) \in \Aut(N) \times \Aut(P/N)$ lifts to an automorphism of $P$.
\end{cor}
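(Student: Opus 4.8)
The plan is to imitate the proof of Theorem 4(1), replacing the abelian-extension sequence \eqref{thma2} by the central-extension sequence \eqref{thmb1} of Theorem 2. The essential simplification in the central case is that there is no compatibility condition to propagate: since $N$ is central in $G$ it is central in every subgroup $P$ lying over $N$, so each $1 \to N \to P \to P/N \to 1$ is again a central extension and \eqref{thmb1} applies to it verbatim, with the full group $\Aut(N)\times\Aut(P/N)$ in place of the compatible pairs. Thus I work directly with the pair $(\theta,\, \phi)$ and the class $[k_{\theta,\, \phi}]$, where $k_{\theta,\, \phi}(x,\,y)=\mu(\phi(x),\, \phi(y))\theta(\mu(x,\, y))^{-1}$ as in the proof of Theorem 2.

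First I would fix, for each prime $p$ dividing $|G/N|$, a $\phi$-invariant Sylow $p$-subgroup $P/N$ as furnished by the hypothesis; invariance under $\phi$ is precisely what makes $\phi|_P$ an automorphism of $P/N$, and is part of the assumption. I would choose a single transversal $t$ for $N$ in $G$ whose restriction is a transversal for $N$ in $P$, so that the $2$-cocycle $\mu$ restricts to the cocycle of the extension of $N$ by $P/N$. By hypothesis $(\theta,\, \phi|_P)$ is induced by an automorphism of $P$ lying in $\Aut_N(P)$, so exactness of \eqref{thmb1} for the extension $1 \to N \to P \to P/N \to 1$ gives $\lambda(\theta,\, \phi|_P)=[k_{\theta,\, \phi|_P}]=1$ in $\Ha^2(P/N,\, N)$.

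Next I would record the naturality statement that the restriction homomorphism $\Ha^2(G/N,\, N)\to\Ha^2(P/N,\, N)$ induced by the inclusion $P/N\hookrightarrow G/N$ carries $[k_{\theta,\, \phi}]$ to $[k_{\theta,\, \phi|_P}]$. With the compatible choice of transversal above this is immediate from the defining formula for $k_{\theta,\, \phi}$, since for $x,\,y\in P/N$ one has $\phi(x),\,\phi(y)\in P/N$ (as $\phi$ preserves $P/N$) and the formula only involves values of $\mu$ on $P/N$. Combining this with the previous step, $[k_{\theta,\, \phi}]$ restricts to $1$ in $\Ha^2(P/N,\, N)$, and then \cite[Chapter III, Proposition 9.5 (ii)]{Brown} yields $[G/N:P/N]\,[k_{\theta,\, \phi}]=1$.

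Finally, since such a Sylow subgroup is available for every prime $p$ dividing $|G/N|$, the integers $[G/N:P/N]$ so obtained have greatest common divisor $1$ (the index attached to $p$ is prime to $p$); hence $[k_{\theta,\, \phi}]=1$ in $\Ha^2(G/N,\, N)$. By exactness of \eqref{thmb1} at $\Aut(N)\times\Aut(G/N)$, the pair $(\theta,\, \phi)$ lies in the image of $\tau$, that is, it lifts to an automorphism of $G$. I expect the only genuinely delicate point to be the naturality of $\lambda$ with respect to restriction, namely arranging a single transversal of $N$ in $G$ that also serves each $P$ and verifying $\phi(P/N)=P/N$; once this is in place, the corestriction--restriction bound and the coprimality of the indices make the remainder of the argument formal.
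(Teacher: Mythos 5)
Your proposal is correct and follows exactly the route the paper intends: the paper states this corollary as an immediate consequence of sequence \eqref{thmb1}, proved by the same argument as Theorem 4(1) with $k_{\theta,\,\phi}$ in place of $k_{\phi}$, the restriction map to $\Ha^2(P/N,\,N)$, the corestriction--restriction identity from Brown, and the coprimality of the Sylow indices. Your added care about choosing a transversal of $N$ in $G$ compatible with each $P$ and about $\phi$-invariance of $P/N$ only makes explicit what the paper leaves implicit.
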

\para
\section{Splitting of sequences}

\par\vspace{.5cm}
Let $1 \to N \to G \to H \to 1$ be an abelian extension. Let $\C_1^*= \{ \theta\in \C_1\,|\, \lambda_1(\theta)= 1\}$ and $\C_2^*= \{ \phi \in \C_2\,|\, \lambda_2(\phi)= 1\}$. Then it follows from Theorem 1 that the sequences
\begin{equation}\label{eqn6}
1 \to {\Aut}^{N,\, H}(G) \to \Aut_N^H(G) \to \C_1^* \to 1
\end{equation}
and
\begin{equation}\label{eqn7}
1 \to  \Aut^{N,\, H}(G) \to \Aut^N(G) \to \C_2^* \to 1
\end{equation}
are exact.
\para
Similarly,\, let $1 \to N \to G \to H \to 1$ be a central extension and $\C^*= \{ (\theta,\, \phi)\in \Aut(N) \times \Aut(H) \,|\, \lambda(\theta,\, \phi)= 1\}$. Then it follows from Theorem 2 that the sequence
\begin{equation}\label{eqn8}
1 \to \Aut^{N,\, H}(G) \to \Aut_N(G) \to \C^* \to 1
\end{equation}
is exact.
\para

\begin{thmf}\label{theorem6}
Let $G$ be a finite group and $N$  an abelian normal subgroup of $G$ such that the sequence $1 \to N \to G \to H \to 1$ splits. Then the sequences \eqref{eqn6} and \eqref{eqn7} split. Further, if $N$ is a central subgroup of $G$, then the sequence \eqref{eqn8} also splits.
\end{thmf}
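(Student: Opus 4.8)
The plan is to use the splitting hypothesis to replace the fixed transversal by a more convenient one. Since $1 \to N \to G \to H \to 1$ splits, we are free to take the transversal $t\colon H \to G$ to be a group homomorphism, i.e. $t(xy) = t(x)t(y)$ for all $x,\,y \in H$; equivalently, the $2$-cocycle $\mu$ is identically trivial, $\mu(x,\,y) = 1$. This is legitimate because the objects occurring in the sequences are transversal-independent: the $H$-action $\alpha$ on the abelian group $N$ does not depend on $t$, and the classes $\lambda_1,\,\lambda_2,\,\lambda$ are independent of $t$ by Lemma \ref{lemma3}. With $\mu \equiv 1$ we immediately get $k_\theta \equiv k_\phi \equiv k_{\theta,\,\phi} \equiv 1$, so $\lambda_1,\,\lambda_2,\,\lambda$ are all trivial and therefore $\C_1^* = \C_1$, $\C_2^* = \C_2$, and $\C^* = \Aut(N)\times\Aut(H)$.

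Next I would exhibit the obvious \emph{diagonal} sections, all obtained by taking the correction term $\chi$ to be constant equal to $1$ in Lemma \ref{lemma1} (resp.\ Lemma 2.2$'$). For \eqref{eqn6} define $s_1\colon \C_1 \to \Aut_N^H(G)$ by $s_1(\theta) = \gamma_\theta$, where $\gamma_\theta(t(x)n) = t(x)\,\theta(n)$; for \eqref{eqn7} define $s_2\colon \C_2 \to \Aut^N(G)$ by $s_2(\phi) = \gamma_\phi$, where $\gamma_\phi(t(x)n) = t(\phi(x))\,n$; and in the central case, for \eqref{eqn8}, define $s\colon \C^* \to \Aut_N(G)$ by $s(\theta,\,\phi) = \gamma_{\theta,\,\phi}$, where $\gamma_{\theta,\,\phi}(t(x)n) = t(\phi(x))\,\theta(n)$. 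For each of these, condition (2) of Lemma \ref{lemma1} (resp.\ (2$'$) of Lemma 2.2$'$) reduces to the trivial identity $1=1$ because $\mu \equiv 1$ and $\chi \equiv 1$, while condition (3) is precisely the compatibility of the relevant pair $(\theta,\,1)$, $(1,\,\phi)$, or $(\theta,\,\phi)$. Hence the converse assertions of Lemma \ref{lemma1} and Lemma 2.2$'$ guarantee that $\gamma_\theta,\,\gamma_\phi,\,\gamma_{\theta,\,\phi}$ are automorphisms of $G$ normalizing $N$, lying in $\Aut_N^H(G)$, $\Aut^N(G)$, and $\Aut_N(G)$ respectively.

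It then remains to verify that $s_1,\,s_2,\,s$ are homomorphic sections. The section property is immediate from $t(1)=1$, which gives $\tau_1(\gamma_\theta)=\theta$, $\tau_2(\gamma_\phi)=\phi$, and $\tau(\gamma_{\theta,\,\phi})=(\theta,\,\phi)$. Multiplicativity is a one-line computation in each case; for example $\gamma_{\theta_1}\gamma_{\theta_2}(t(x)n) = \gamma_{\theta_1}\big(t(x)\theta_2(n)\big) = t(x)\,\theta_1\theta_2(n) = \gamma_{\theta_1\theta_2}(t(x)n)$, and likewise $\gamma_{\phi_1}\gamma_{\phi_2} = \gamma_{\phi_1\phi_2}$ and $\gamma_{\theta_1,\,\phi_1}\gamma_{\theta_2,\,\phi_2} = \gamma_{\theta_1\theta_2,\,\phi_1\phi_2}$, the last matching the componentwise product on $\C^* \subseteq \Aut(N)\times\Aut(H)$. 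Thus each $s_i$ is a homomorphism with $\tau_i \circ s_i = \mathrm{id}$, so the three sequences split.

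The crux of the argument is the passage to a multiplicative transversal in the first step. The diagonal assignments $\theta \mapsto \gamma_\theta$ etc.\ are multiplicative \emph{only} because $\mu \equiv 1$, so that no $2$-cocycle correction terms appear; for a general transversal the choice $\chi \equiv 1$ would neither satisfy the automorphism condition (2) nor be compatible with composition. This is precisely the point at which the splitting hypothesis is needed, and it is what one should expect to fail for non-split extensions.
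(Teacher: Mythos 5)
Your proposal is correct and takes essentially the same route as the paper: the paper likewise uses the splitting to write $G = N \rtimes H$ (equivalently, to make $\mu$ trivial via a multiplicative transversal) and then defines exactly your diagonal sections $hn \mapsto h\theta(n)$, $hn \mapsto \phi(h)n$, and $hn \mapsto \phi(h)\theta(n)$, checking directly that they are homomorphic sections. The only cosmetic difference is that you invoke the converse parts of Lemma \ref{lemma1} and Lemma 2.2$'$ where the paper verifies multiplicativity by a short direct computation.
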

\par\vspace{.15cm}
\begin{proof}
Since the sequence $1 \to N \to G \to H \to 1$ splits, we have $\mu \in \B^2(H,\, N)$. We can write $G= N \rtimes H$ as a semidirect product of $N$ by $H$. Every element $g \in G$ can be written uniquely  as $g=hn$ with  $h \in H$ and $n \in N$. 
\para
We first  show that the sequence \eqref{eqn6} splits. Note that $\C_1^* = \{ \theta \in \Aut(N)\, |\, \theta(n^h)= \theta(n)^h~  \textrm{for all}~ n \in N~ \textrm{and}~ h \in H \}$. Define a map $\psi_1:\C_1^* \to \Aut_N^H(G)$ by $\psi_1(\theta)=\gamma_1$, where $\gamma_1:G \to G$ is given by $\gamma_1(g)=\gamma_1(hn)= h\theta(n)$ for $g=hn$ in $G$. Then for $g_1=h_1n_1$,\, $g_2=h_2n_2$ in $G$, we have
{\setlength\arraycolsep{2pt}
\begin{eqnarray*}
\gamma_1(g_1g_2) &=&  \gamma_1\big((h_1n_1)(h_2n_2)\big)= \gamma_1(h_1h_2n_1^{h_2}n_2)\\
&=& h_1h_2 \theta (n_1^{h_2}n_2)= h_1h_2\theta(n_1)^{h_2}\theta(n_2)\\
&=& \gamma_1(g_1)\gamma_1(g_2),
\end{eqnarray*}}
showing that $\gamma_1$ is a homomorphism. It is easy to see that $\gamma_1$ is an automorphism of $G$ which normalizes $N$ and induces identity on $H$. Notice that $\psi_1$ is a section in the sequence \eqref{eqn6} and hence the sequence splits.
\para
Next we show that  the sequence \eqref{eqn7} splits. Notice that $\C_2^* = \{ \phi \in \Aut(H) \,|\, n^{\phi(h)}= n^h~  \textrm{for all}~ n \in N~ \textrm{and}~ h \in H \}$. Define a map $\psi_2:\C_2^* \to \Aut^N(G)$ by setting $\psi_2(\phi)=\gamma_2$, where $\gamma_2:G \to G$ is given by $\gamma_2(g)=\gamma_2(hn)= \phi(h)n$ for $g=hn$ in $G$. Then for $g_1=h_1n_1$, $g_2=h_2n_2$ in $G$, we have
{\setlength\arraycolsep{2pt}
\begin{eqnarray*}
\gamma_2(g_1g_2) &=& \gamma_2\big((h_1n_1)(h_2n_2)\big)= \gamma_2(h_1h_2n_1^{h_2}n_2)\\
&=& \phi(h_1h_2)n_1^{h_2}n_2= \phi(h_1)\phi(h_2) n_1^{h_2}n_2\\
&=& \gamma_2(g_1)\gamma_2(g_2).
\end{eqnarray*}}
This shows that $\gamma_2$ is a homomorphism. It is not difficult to show that $\gamma_2$ is an automorphism of $G$ which centralizes $N$. Notice that $\psi_2$ is a section in the sequence \eqref{eqn7} and hence the sequence splits.
\para
Finally, we consider the sequence \eqref{eqn8}. Since $N$ is central, $G$ is a direct product of $H$ and $N$. Notice that $\C^* =  \Aut(N) \times \Aut(H)$. For a given pair $(\theta,\, \phi) \in \Aut(N) \times \Aut(H)$, we can define $f \in \Aut(G)$ by $f(hn)= \phi(h) \theta(n)$ for $g=hn$ in $G$. This gives rise to a section in the sequence \eqref{eqn8} and hence the sequence splits.  $\Box$
\end{proof}
\para
\begin{remark}
The converse of Theorem 8 is not true, in general, as is shown by the following examples.\para
\begin{enumerate}
\item Let $1 \to N \to G \to H \to 1$ be an exact sequence, where $G$ is a non-abelian finite group of nilpotency class 2 such that $N=Z(G)=[G,\,G]$ and $H \simeq G/N$. Notice that this sequence does not split under the natural action of $H$ on $N$. For, if the sequence splits, then $G$ is a direct product of $N$ and $H$. This implies that $G$ is abelian, which is a contradiction. In this case, $\Aut^{N,\, H}(G)= \Autcent(G)= \Aut^H(G)$, where $\Autcent(G)$ is the group of central automorphisms of $G$. Thus from the exactness of sequence \eqref{eqn6}, $\C_1^*=1$ and the sequence splits.
\\
\\
\item Consider an exact sequence $1 \to N \to G \to H \to 1$, where $G$ is an extra-special 2-group of order $2^{2n+1}$ with $n=1$ or $2$, and $N=Z(G)=[G,\,G]$. Notice that the sequence does not split. For this sequence we have $\Aut^{N,\, H}(G)=\Inn(G)=\Autcent(G)$ and $\Aut^N(G)=\Aut(G)$. Define a map $\rho:H \times H \to N$ by $\rho \big(t(x)N,\, t(y)N \big)= [t(x),\, t(y)]$. Notice that $\rho$ is a bilinear map. Let $\gamma \in \Aut(G)$ and $\phi = \tau_2(\gamma)$. Now
{\setlength\arraycolsep{2pt}
\begin{eqnarray*}
\rho \big(\phi(t(x)N),\, \phi(t(y)N) \big) &=& \rho \big(\gamma(t(x))N,\, \gamma(t(y))N \big)= [\gamma(t(x)),\, \gamma(t(y))]\\
&=& \gamma([t(x),\, t(y)])= [t(x),\, t(y)]\\
&=& \rho \big(t(x)N,\, t(y)N \big).
\end{eqnarray*}}
This shows that $\phi$,\, viewed as a linear transformation of the $\mathbb{F}_2$-vector space $H$, is orthogonal. Thus $\phi \in O(2n,\,2)$. This shows that $\C_2^* \subset O(2n,\,2)$. It is well-known that $\Aut(G)/\Inn(G)$ is isomorphic to the full orthogonal group $O(2n,\,2)$. Thus from the exactness of the sequence \eqref{eqn7}, we have $\C_2^* = O(2n,\,2)$. It follows from \cite[Theorem 1]{Griess} that the sequence \eqref{eqn7} splits.\\
\\
\item Let $1 \to N \to G \to H \to 1$ be the exact sequence of example (2) above. Since $\Aut(N)=1$, the equations \eqref{eqn7} and \eqref{eqn8} are same. Hence the sequence \eqref{eqn8} splits while $1 \to N \to G \to H \to 1$ does not split.
\end{enumerate}
\end{remark}

\Para

\bibliographystyle{amsplain}

\end{document}